\newtheorem{theorem}{Theorem}[section]
\newtheorem{definition}[theorem]{Definition}
\newtheorem{hypothesis}[theorem]{Hypothesis}
\newtheorem{lemma}[theorem]{Lemma}
\newtheorem{proposition}[theorem]{Proposition}
\def\arrowdown#1#2{\Big\downarrow \rlap{$\vcenter{\hbox{$\scriptstyle#2$}}$}
{\hbox to -10pt{\hss{$\vcenter{\hbox{$\scriptstyle#1$}}$}}}}
\def\arrowup#1#2{\Big\uparrow \rlap{$\vcenter{\hbox{$\scriptstyle#2$}}$}
{\hbox to -10pt{\hss{$\vcenter{\hbox{$\scriptstyle#1$}}$}}}}
\begin{document}
\title [Morphisms to $\mathbb{P}^1$]{Finite morphisms from curves over Dedekind rings to $\mathbb{P}^1$.}

\author[T. Chinburg]{T. Chinburg*}\thanks{*Supported by NSF Grant \# DMS08-01030}
\address{Ted Chinburg, Dept. of Math\\Univ. of Penn.\\Phila. PA. 19104, U.S.A.}
\email{ted@math.upenn.edu}

\author[G. Pappas]{G. Pappas\dag}
\thanks{\dag Supported by NSF Grant \# DMS08-02686}
\address{Georgios Pappas, Dept. of Math\\Michigan State Univ.\\East Lansing, MI 48824, U.S.A.  }
\email{pappas@math.msu.edu}

\author[M. Taylor]{M. J. Taylor\ddag}
\thanks{\ddag Supported by a Royal Society Wolfson Merit Award}
\address{Martin J. Taylor, School of Mathematics\\ University of Manchester\\ Manchester, M60 1QD, U.K.}
\email{Martin.Taylor@manchester.ac.uk}

\subjclass[2000]{Primary 14H25; Secondary 14D10, 14G25}

\date{\today}

\begin{abstract}
A theorem of B. Green states that if $A$ is a Dedekind ring whose fraction field is
a local or global field, every normal projective curve over $\mathrm{Spec}(A)$ has a finite morphism
to $\mathbb{P}^1_A$.  We give  a different proof of a variant of this result using  intersection theory
and work of Moret-Bailly.
\end{abstract}

\maketitle

\maketitle

\section{Introduction}
\label{s:intro}
\setcounter{equation}{0}

In this paper we will use intersection theory to prove a variant of a theorem first proved by B. Green.
We will  make the following hypotheses:
\begin{hypothesis}
\label{hyp:start} Let $A$ be a excellent Dedekind ring such that:
\begin{enumerate}
\item[i.]  The residue field of each maximal ideal of $A$ is an algebraic extension of a finite field.
\item[ii.]  If $A'$ is the normalization of $A$ in a finite extension $K'$ of the fraction
field $K$ of $A$, then $\mathrm{Pic}(A')$ is a torsion group.
\end{enumerate}
\end{hypothesis}
\noindent

   \begin{theorem}
\label{thm:niceflat}
Suppose that $\mathcal{Y}$ is a normal scheme over $\mathrm{Spec}(A)$ whose structure morphism
 is flat and projective with fibers of dimension $1$. Then there is a finite  flat morphism $\pi:\mathcal{Y} \to \mathbb{P}^1_{A}$ over $\mathrm{Spec}(A)$.
\end{theorem}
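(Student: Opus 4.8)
The plan is to reduce the flatness assertion to the construction of a \emph{finite} morphism, and then to build that morphism by exhibiting a fiberwise ample line bundle together with a base-point-free pencil.

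\textbf{Reduction to finiteness via miracle flatness.} Since $A$ is an excellent Dedekind ring it is regular of dimension $1$, so $\mathbb{P}^1_A$ is regular of dimension $2$. The scheme $\mathcal{Y}$ is normal of dimension $2$ (its fibers have dimension $1$ over a base of dimension $1$), hence by Serre's criterion it satisfies $S_2$; a Noetherian local ring of dimension at most $2$ satisfying $S_2$ is Cohen--Macaulay, so $\mathcal{Y}$ is Cohen--Macaulay. Consequently, once I produce \emph{any} finite morphism $\pi : \mathcal{Y} \to \mathbb{P}^1_A$, its fibers are $0$-dimensional, and the local criterion of flatness (``miracle flatness'') for a morphism from a Cohen--Macaulay scheme to a regular scheme with equidimensional fibers of the expected dimension $\dim \mathcal{Y} - \dim \mathbb{P}^1_A = 0$ shows that $\pi$ is automatically flat. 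Thus it suffices to construct a finite morphism to $\mathbb{P}^1_A$.

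\textbf{Construction of a finite morphism.} Giving a morphism $\mathcal{Y}\to\mathbb{P}^1_A$ amounts to choosing a line bundle $\mathcal{L}$ on $\mathcal{Y}$ together with two sections $s_0,s_1$ having no common zero; the morphism is proper because $\mathcal{Y}$ is projective over $A$, so it is finite as soon as it is quasi-finite, i.e. as soon as the zero divisors of $s_0$ and $s_1$ meet every vertical component of every fiber $\mathcal{Y}_s$ in dimension $0$. I would therefore look for a fiberwise ample class $\mathcal{L}$ and two effective divisors $D_0=\{s_0=0\}$ and $D_\infty=\{s_1=0\}$ that are \emph{horizontal} (finite flat over $\mathrm{Spec}(A)$, i.e. multisections), linearly equivalent, and have \emph{disjoint} supports. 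Positivity of $\mathcal{L}$ on each vertical component guarantees that no component of a fiber is contracted, so the map is quasi-finite, while disjointness of $D_0$ and $D_\infty$ guarantees base-point-freeness; one verifies these two numerical conditions using intersection theory on the arithmetic surface $\mathcal{Y}$. Equivalently, after embedding $\mathcal{Y}\hookrightarrow\mathbb{P}^N_A$ by a very ample sheaf, the same map arises as a linear projection away from a codimension-two linear center $L\cong\mathbb{P}^{N-2}_A$ disjoint from $\mathcal{Y}$, such a projection being finite on $\mathcal{Y}$ precisely when $L\cap\mathcal{Y}=\varnothing$.

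\textbf{The main obstacle and the role of Moret-Bailly.} Over the generic fiber, a curve over the field $K$, such a pencil (equivalently such a center $L$) always exists by a dimension count, so the difficulty is entirely one of \emph{spreading the choice out} so that disjointness and fiberwise positivity hold over every closed point of $\mathrm{Spec}(A)$ simultaneously. Over closed points whose residue field is finite the naive Bertini/general-position argument breaks down, since there may be too few rational points to avoid the finitely many conditions imposed in a given fiber, and there are in general infinitely many closed fibers to control at once. This is exactly the step where I would invoke the work of Moret-Bailly: his Skolem-type existence theorems produce horizontal subschemes (sections of suitable bundles) with prescribed local behaviour in each fiber, and their hypotheses are met precisely because of Hypothesis \ref{hyp:start}---the residue fields are algebraic over finite fields and the relevant Picard groups are torsion, the latter being what controls linear equivalence when moving $D_\infty$ off $D_0$. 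Using Moret-Bailly to produce $D_0$ together with a disjoint linearly equivalent $D_\infty$ (after possibly replacing $\mathcal{L}$ by a positive multiple to have enough sections), and intersection theory to certify that the resulting pencil is base-point-free and of positive fiberwise degree, yields the finite morphism $\pi$; miracle flatness then finishes the proof of Theorem \ref{thm:niceflat}.
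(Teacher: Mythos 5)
Your overall strategy coincides with the paper's: reduce to producing two disjoint, linearly equivalent, effective, horizontal, fiberwise ample divisors; observe that the resulting pencil is quasi-finite and proper, hence finite; and deduce flatness from miracle flatness (Cohen--Macaulay source, regular target, zero-dimensional fibers). That part reproduces Lemma \ref{lem:allright} essentially verbatim, and your positivity argument for quasi-finiteness is a reasonable variant of the one given there. The genuine gap is in the step you summarize as ``using Moret-Bailly to produce $D_0$ together with a disjoint linearly equivalent $D_\infty$.'' Moret-Bailly's theorem does not produce this. What it produces (Proposition \ref{prop:divprop}) is a section of $O_{\mathcal{Y}}(n(\mathcal{D}+\Delta))$ generating the stalk along a prescribed closed set, i.e.\ a function $f$ with $\mathrm{div}_{\mathcal{Y}}(f) = \mathcal{D}_1(f) - n\mathcal{D} + \sum_{v} E_v$, where the $E_v$ are vertical divisors supported in the reducible fibers and are in general nonzero. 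So the horizontal divisor $\mathcal{D}_1(f)$ you obtain is disjoint from $\mathcal{D}$ but is linearly equivalent to $n\mathcal{D}$ only modulo an uncontrolled vertical error; removing that error is the main content of the proof, not a detail absorbed into ``replacing $\mathcal{L}$ by a positive multiple.''

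Concretely, the paper handles this by producing, for each reducible fiber $\mathcal{Y}_{v}$ and each component $C \in S_{v}$, a separate function whose vertical part at $v$ has negative intersection number with $C$ and positive intersection number with every other component (Proposition \ref{prop:divprop}(v)); it then combines these with positive integer exponents using a linear-algebra lemma (Lemma \ref{lem:linearalg}: a rational matrix with negative diagonal, positive off-diagonal entries and zero row sums admits a positive combination of its rows equal to zero), so that the combined vertical part is orthogonal to every component of the fiber. Negative semi-definiteness of the fiber intersection pairing (Lemma \ref{lem:okay}) then forces that combined vertical part to be a rational multiple of the whole fiber $\mathcal{Y}_{v}$, which is principal up to the torsion of $\mathrm{Pic}(A)$; an induction over the finitely many reducible fibers (Lemma \ref{lem:fixit}) completes the construction. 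None of this machinery --- the sign control on the $E_v$, the combination lemma, the appeal to negative semi-definiteness, and the induction --- appears in your outline, and without it the assertion that $D_0$ and $D_\infty$ can be taken genuinely linearly equivalent (rather than linearly equivalent modulo vertical divisors) is unjustified.
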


This theorem was proved by Green, see \cite[Theorem 2]{Green},  when $K$ is a local or global field.  (See also \cite{Green2}.)
The proof of Theorem 2 in \cite{Green} is written in the language of valuation theory, and follows from a more general result giving sufficient conditions for
a family of valuations on the function field of $\mathcal{Y}$ to be principal.

Since Theorem 1.2 is a geometric result, it is natural to seek an entirely geometric proof. In this paper shall provide such a proof using intersection theory and the work of
Moret-Bailly in \cite{MB}. To give a little more insight into the structure of the proof, we remark that first step is to  show in \S \ref{s:horiz} that the result follows
from the existence of effective horizontal  linearly equivalent ample divisors
$\mathcal{D}_1$
and $\mathcal{D}_2$ on $\mathcal{Y}$
which do not intersect.  Note that  if there is a finite morphism
$\mathcal{Y} \to \mathbb{P}^1_A$, then pulling back the divisors
on $\mathbb{P}^1_A$ associated to homogeneous coordinates $x_0$
and $x_1$ results in such $\mathcal{D}_1$ and $\mathcal{D}_2$.

We then use results of
Moret-Bailly
\cite{MB} to produce an element
$f$ of the function field $K(\mathcal{Y})$ for which $\mathrm{div}_{\mathcal{Y}}(f) = \mathcal{D}_1 - \mathcal{D}_2$ for some
$\mathcal{D}_i$ of the above kind.   Moret-Bailly's method does not lead directly to $f$ for which
$\mathrm{div}_{\mathcal{Y}}(f)$ has no vertical components.  Instead  we produce
a finite set $\{f_i\}_i$ of functions for which the horizontal parts of $\mathrm{div}_{\mathcal{Y}}(f_i)$
are of the desired kind, and for which the vertical parts of the $\mathrm{div}_{\mathcal{Y}}(f_i)$
have the following property.  The vertical parts as $i$ varies generate a subgroup of finite index in the subgroup of the divisor class group of $\mathcal{Y}$ generated by
divisors contained in the reducible fibers of $\mathcal{Y}$ over $\mathrm{Spec}(A)$.
We then use the negative semi-definiteness of the intersection
pairing in fibers to show that a constant times a product of positive integral
powers of the $f_i$ has  a divisor $\mathcal{D}_1 - \mathcal{D}_2$  of the required kind.

In conclusion, we note that Theorem 1.2 and B. Green's results in \cite{Green} or \cite{Green2} are in fact slightly different. Each covers cases that the other does not. His results apply to rings $A$ not satisfying Hypothesis \ref{hyp:start}. Whereas our theorem applies, for example, to the ring $A = \mathbb{Z}[\mu_{p^\infty}][1/p]$ obtained by adjoining to $\mathbb{Z}$ all $p$-power
roots of unity in an algebraic closure of $\mathbb{Q}$ and by then inverting the prime number $p$.

\medbreak
\noindent {\bf Acknowledgements:}
The authors would like to thank
M. Matignon for pointing out the prior work of B. Green after a preliminary version of this
manuscript had been circulated.  The authors would also like to thank J.-B. Bost for
useful discussions.
\medbreak

\section{Horizontal divisors}
\label{s:horiz}
\setcounter{equation}{0}

\begin{lemma}
\label{lem:allright}
To prove Theorem \ref{thm:niceflat}, it will suffice to show that when $\mathcal{Y}$ is connected, there
are ample, effective linearly equivalent horizontal divisors $\mathcal{D}_1$ and $\mathcal{D}_2$ on $\mathcal{Y}$
such that $\mathcal{D}_1 \cap \mathcal{D}_2 = \emptyset$.
\end{lemma}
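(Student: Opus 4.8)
The plan is to derive Theorem \ref{thm:niceflat} from the stated divisor condition in two stages: first reduce to the case where $\mathcal{Y}$ is connected, and then manufacture the morphism $\pi$ from the divisors and check that it is finite and flat.

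First I would dispose of disconnectedness. Since $\mathcal{Y}$ is normal and Noetherian it is the disjoint union of finitely many connected components $\mathcal{Y}_1,\dots,\mathcal{Y}_n$, each of which is integral and, being open and closed in $\mathcal{Y}$, is again normal and flat and projective over $\mathrm{Spec}(A)$ with one-dimensional fibers, so each inherits the hypotheses of the theorem. Granting a finite flat morphism $\pi_j:\mathcal{Y}_j\to\mathbb{P}^1_A$ for each connected $\mathcal{Y}_j$, the disjoint union $\pi=\coprod_j\pi_j$ is finite, and it is flat because $\pi_*\mathcal{O}_{\mathcal{Y}}=\bigoplus_j(\pi_j)_*\mathcal{O}_{\mathcal{Y}_j}$ is a direct sum of locally free $\mathcal{O}_{\mathbb{P}^1_A}$-modules, hence locally free. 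Thus it suffices to treat the connected (equivalently integral) case; from now on I write $\mathcal{Y}$ for a single $\mathcal{Y}_j$.

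Assume then that $\mathcal{Y}$ is connected and that $\mathcal{D}_1\sim\mathcal{D}_2$ are ample, effective, horizontal, and disjoint. Set $\mathcal{L}=\mathcal{O}_{\mathcal{Y}}(\mathcal{D}_1)\cong\mathcal{O}_{\mathcal{Y}}(\mathcal{D}_2)$ and let $s_0,s_1\in H^0(\mathcal{Y},\mathcal{L})$ be the sections with $\mathrm{div}(s_0)=\mathcal{D}_1$ and $\mathrm{div}(s_1)=\mathcal{D}_2$. The condition $\mathcal{D}_1\cap\mathcal{D}_2=\emptyset$ says exactly that $s_0$ and $s_1$ have no common zero, so the pencil they span is base-point free and defines a morphism $[s_0:s_1]:\mathcal{Y}\to\mathbb{P}^1$; combining it with the structure morphism $\mathcal{Y}\to\mathrm{Spec}(A)$ produces $\pi:\mathcal{Y}\to\mathbb{P}^1_A$ over $\mathrm{Spec}(A)$, with homogeneous coordinates $x_0,x_1$ chosen so that $\mathcal{D}_1=\pi^{*}\{x_0=0\}$ and $\mathcal{D}_2=\pi^{*}\{x_1=0\}$. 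The crux, and the step I expect to be the main obstacle, is showing that $\pi$ is finite. As $\mathcal{Y}\to\mathrm{Spec}(A)$ is projective and $\mathbb{P}^1_A\to\mathrm{Spec}(A)$ is separated, $\pi$ is proper, so by Zariski's main theorem it is finite once it is quasi-finite, i.e.\ has no positive-dimensional fibre. Suppose some fibre contained an integral curve $C$ with $\pi(C)$ a single point $P$. Then the composite $C\to\mathbb{P}^1_A\to\mathrm{Spec}(A)$, which is the structure map of $C$, has image equal to that of $P$, a single point; since $C$ is proper over $A$ this image is closed, hence a closed point $q\in\mathrm{Spec}(A)$, so $C$ lies in the fibre $\mathcal{Y}_q$ and is a complete curve. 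Here the two hypotheses interact: ampleness gives $\deg(\mathcal{L}|_C)=\mathcal{D}_1\cdot C=\mathcal{D}_2\cdot C>0$, so both $s_0|_C$ and $s_1|_C$ must vanish somewhere on $C$, while disjointness of $\mathcal{D}_1$ and $\mathcal{D}_2$ prevents them from vanishing at a common point. Hence $\pi$ takes the value $[0:1]$ at a point of $C\cap\mathcal{D}_1$ and the distinct value $[1:0]$ at a point of $C\cap\mathcal{D}_2$, contradicting $\pi(C)=\{P\}$. Therefore $\pi$ is quasi-finite and so finite; and as its image is a closed, irreducible, two-dimensional subscheme of the irreducible surface $\mathbb{P}^1_A$, it is surjective.

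Finally I would verify flatness by the local criterion over a regular base (``miracle flatness''). A two-dimensional normal scheme satisfies Serre's condition $S_2$ and is therefore Cohen--Macaulay, so $\mathcal{Y}$ is Cohen--Macaulay, while $\mathbb{P}^1_A$ is regular because $A$ is. Since $A$ is excellent both schemes are catenary, and for $x\in\mathcal{Y}$ with $y=\pi(x)$ the fibre of $\pi$ through $x$ is zero-dimensional, so the dimension formula yields $\dim\mathcal{O}_{\mathcal{Y},x}=\dim\mathcal{O}_{\mathbb{P}^1_A,y}$; flatness at $x$ then follows from this equality of local dimensions for a Cohen--Macaulay source over a regular target. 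Thus $\pi$ is finite and flat, which establishes Theorem \ref{thm:niceflat} in the connected case and, by the first step, in general.
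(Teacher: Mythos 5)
Your proof follows the same skeleton as the paper's: reduce to the connected case, use the two disjoint effective divisors in a common linear system to define a morphism to $\mathbb{P}^1_A$, prove quasi-finiteness, upgrade to finiteness via properness, and deduce flatness from ``Cohen--Macaulay source over regular target'' (the paper cites Matsumura for exactly this). Where you genuinely diverge is the quasi-finiteness argument on vertical components. The paper works in the function field of a fibral component $Z$: if $h_2/h_1$ were constant on $Z$ it would lie in the field of constants $\ell(Z)$, which by Hypothesis \ref{hyp:start}(i) is an algebraic extension of a finite field, so the constant is a root of unity, and evaluating at a point of $Z\cap H_2$ gives a contradiction. You instead use that an ample line bundle has positive degree on every integral projective curve in a fiber, so $s_0$ and $s_1$ must each vanish somewhere on a contracted curve, necessarily at distinct points since $\mathcal{D}_1\cap\mathcal{D}_2=\emptyset$. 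This is cleaner and, notably, makes no use of Hypothesis \ref{hyp:start}(i).

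There is, however, one step that does not hold up as written: you assert that an integral curve $C$ contained in a fibre $\pi^{-1}(P)$ is proper over $A$, so that its image in $\mathrm{Spec}(A)$ is a closed point $q$. This is only correct when $P$ is a closed point of $\mathbb{P}^1_A$. If $P$ lies over the generic point of $\mathrm{Spec}(A)$ --- in which case $C$ is forced to be the generic fibre $Y$ itself, i.e.\ the case where $\pi$ is constant on $Y$ --- then $C$ is proper over $K$ but not over $A$, and its image in $\mathrm{Spec}(A)$ is not closed. This case cannot be skipped, and the paper treats it separately. Fortunately your own mechanism repairs it verbatim: $\mathcal{L}|_Y$ is ample of positive degree on the projective curve $Y$ over $K$, so $s_0|_Y$ and $s_1|_Y$ each vanish somewhere, at points of the disjoint sets $\mathcal{D}_1\cap Y$ and $\mathcal{D}_2\cap Y$, whence $\pi|_Y$ is non-constant. (Alternatively: for the proper morphism $\pi$ the fibre dimension is upper semi-continuous on the target, so a positive-dimensional fibre over some point of $\mathbb{P}^1_A$ forces one over a closed point, reducing to the case you did handle.) With that emendation the argument is correct.
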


\begin{proof}  Since $Y$ is normal it is the disjoint union of its connected components, so
we can reduce to the case in which $\mathcal{Y}$ is connected.
Given $\mathcal{D}_1$ and $\mathcal{D}_2$ as in the Lemma, we can replace
each of these divisors by a high integral multiple of themselves to be able
to assume that there is a projective embedding $\mathcal{Y} \to \mathbb{P}^n_A$
and hyperplanes $H_1$ and $H_2$ in $\mathbb{P}^n_A$ such that $H_i \cap \mathcal{Y} = \mathcal{D}_i$ for
$i = 1, 2$.  The fact that $\mathcal{D}_1 \cap \mathcal{D}_2 = \emptyset$ implies that $\mathcal{Y}$
is contained in the open set $U = \mathbb{P}^n_A - (H_1 \cap H_2)$.  Let
$h_i(x)$ be a linear form in the homogenous coordinates $x = (x_0;\ldots;x_n)$ of $\mathbb{P}^n_A$
such that $H_i$ is the zero locus of $h_i(x)$.  There is a morphism $f:U \to \mathbb{P}^1_A$ defined
by $x = (x_0;\ldots;x_n) \to (h_1(x);h_2(x))$.

We first show that
the restriction $f_\mathcal{Y}:\mathcal{Y} \to \mathbb{P}^1_A$ of $f$ to $\mathcal{Y}$ is quasi-finite.
Let $c$ be a point of $\mathrm{Spec}(A)$, and let $Z$ be the reduction
of an irreducible component of the fiber $\mathcal{Y}_c$ of $\mathcal{Y}$ over $c$.  Since $\mathcal{Y}_c$
has finitely many irreducible components, it will suffice
to show that the restriction $f_Z:Z \to \mathbb{P}^1_A$ of $f$  is quasi-finite.  If
$H_1$ contains $Z$, then $Z - (Z \cap U) = Z \cap H_1 \cap H_2 = Z \cap H_2 \ne \emptyset$
since $Z$ is projective and $H_2$ is a hyperplane in $\mathbb{P}^n_A$.  This
is a contradiction, and similarly $H_2$ cannot contain $Z$.  Thus $h(x) = h_2(x)/h_1(x)$
defines a non-zero rational function on $Z$, and it will suffice to show that $h(x)$ is not
in the field of constants $\ell(Z)$ of the function field $k(Z)$ of $Z$.

Suppose first that $c$ is the generic point of $\mathrm{Spec}(A)$.  Since $\mathcal{Y}$
is normal and connected,  its generic fiber $Y$ is regular and irreducible.
Thus $Z = Y$.  If $h(x) \in \ell(Z)$, the ample divisors which $\mathcal{D}_1$ and $\mathcal{D}_2$
determine on $Y$ are equal, contradicting  $\mathcal{D}_1 \cap  \mathcal{D}_2 = \emptyset$.

Suppose now that $c$ is a closed point of $\mathrm{Spec}(A)$.  Then $\ell(Z)$ is a finite extension of the residue field
$k(c)$ of $c$. By Hypothesis \ref{hyp:start}(i), $k(c)$ is an algebraic extension of a finite field, so $\ell(Z)$ is also such an extension.  Therefore if $h(x) \in \ell(Z)$, the fact that $h(x)$ is non-zero implies that there is an integer $m > 0$
such that $h(x)^m = (h_2(x)/h_1(x))^m = 1$ in $\ell(Z)$.   Hence the zero locus of $h_2(x)^m - h_1(x)^m$
contains a dense open subset of $Z$, and thus all of $Z$.
Since $Z$ is projective over $A$, there is a point $z$ in $Z \cap H_2$.
Then $h_2(z) = 0$ and $h_2(z)^m - h_1(z)^m = 0$, so $h_1(z)^m = 0$
and $h_1(z) = 0$.  This implies $z \in Z \cap H_0 \cap H_1$, which contradicts
$\mathcal{D}_1 \cap  \mathcal{D}_2 = \emptyset$.
Thus $f_\mathcal{Y}$ is quasi-finite.

Since the projective morphism $\mathcal{Y} \to \mathrm{Spec}(A)$
factors through $f_\mathcal{Y}$, we see that $f_\mathcal{Y}$ must be projective.  By \cite[Ex. III.11.2]{H}
a quasi-finite projective morphism is finite. By \cite[Thm. 38, p. 124]{Mat}, $\mathcal{Y}$ is Cohen-Macaulay
because it noetherian and normal of dimension two.  By \cite[Thm. 46, p. 140]{Mat}, a finite
morphism from a Cohen-Macaulay scheme to a regular scheme is flat.  Hence $f_\mathcal{Y}:\mathcal{Y} \to \mathbb{P}^1_A$ is finite and flat, so Lemma \ref{lem:allright}
is proved.
\end{proof}

\section{Intersection numbers and ample divisors}
\label{s:=intample}
\setcounter{equation}{0}

In this section we define some notation and we recall some well known results about
intersection numbers and ample divisors.  We will assume throughout that $\mathcal{Y}$
is connected.

\begin{definition}
\label{def:defins}   Let $S_v = S_v(\mathcal{Y})$ be the set of irreducible components
of the fiber $\mathcal{Y}_v = k(v) \otimes_A \mathcal{Y}$ of $\mathcal{Y}$ over $v \in \mathrm{Spec}(A)$.  Define $\mathcal{Y}_v^{red}$ to be the reduction of $\mathcal{Y}_v$.  Let $Y = K \otimes_A \mathcal{Y}$ be the general fiber of $\mathcal{Y}$.
\end{definition}

\begin{definition}
\label{def:inter}Suppose $E$ is a Cartier divisor on $\mathcal{Y}$
and that $C \in S_v$ for some maximal ideal $v$ of $A$.  Let $C^\#$ be the normalization of $C$, and let $i:C^\# \to \mathcal{Y}$ be the composition
of the natural morphism $C^\# \to C$ with the closed immersion $C \to \mathcal{Y}$.  Define
  $$\langle E,C\rangle_v = \mathrm{deg}_{k(v)} i^*(O_{\mathcal{Y}}(E))$$ where $ i^*(O_{\mathcal{Y}}(E))$ is a line bundle on the regular curve $C^\#$ over the residue field
$k(v)$ of $v$.  This pairing may be extended by bilinearity to all Cartier divisors $E$ and to all Weil divisors $C$ in the
free abelian group $W_v$ generated by $S_v$.
\end{definition}

The value of $\langle E,C\rangle$
clearly depends only on the linear equivalence class of $E$.  We will need the following
result.

\begin{lemma}
\label{lem:okay}{\rm(Moret-Bailly)}
A non-zero integral
  multiple of a Weil divisor on $\mathcal{Y}$ is a Cartier divisor.
One may thus extend $\langle E, C \rangle_v$ to all Weil divisors $E$ and all $C \in W_v$
  by linearity in both arguments.  Define $\mathbb{Q}W_v = \mathbb{Q}\otimes_\mathbb{Z} W_v$
  and let $\mathbb{Q}\mathcal{Y}_v$ be the subspace spanned by
  the Weil divisor $\mathcal{Y}_v$.  Then $\langle\ , \ \rangle_v$ gives rise to a negative
  definite pairing
  \begin{equation}
  \label{eq:negdef}
  \langle\ , \ \rangle_v:\frac{\mathbb{Q}W_v}{\mathbb{Q}\mathcal{Y}_v} \times  \frac{\mathbb{Q}W_v}{\mathbb{Q}\mathcal{Y}_v} \to \mathbb{Q}.
  \end{equation}
  Let $\mathcal{T}$ be a horizontal Cartier divisor on $\mathcal{Y}$, and let $T$ be the general
  fiber of $\mathcal{T}$. Then
  \begin{equation}
  \label{eq:fiberint}
  \langle \mathcal{T},\mathcal{Y}_v\rangle_v = \mathrm{deg}_K(T)
  \end{equation}
  for all maximal ideals $v \in \mathrm{Spec}(A)$.
  \end{lemma}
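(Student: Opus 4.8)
The plan is to prove the three assertions in turn, beginning with the claim that some nonzero multiple of every Weil divisor on $\mathcal{Y}$ is Cartier. Since $\mathcal{Y}$ is normal it is regular in codimension one, so the only obstruction lives at the closed points $y \in \mathcal{Y}$, where $\mathcal{O}_{\mathcal{Y},y}$ is a two-dimensional excellent normal local ring; the assertion is exactly that each local divisor class group $\mathrm{Cl}(\mathcal{O}_{\mathcal{Y},y})$ is torsion, i.e. that $\mathcal{Y}$ is $\mathbb{Q}$-factorial. Each such $y$ lies over a closed point $v$ of $\mathrm{Spec}(A)$, so by Hypothesis \ref{hyp:start}(i) the residue field $k(y)$ is algebraic over a finite field. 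I would resolve $\mathrm{Spec}(\mathcal{O}_{\mathcal{Y},y})$ and present $\mathrm{Cl}(\mathcal{O}_{\mathcal{Y},y})$ as a quotient of the Picard group of the resolution by the subgroup generated by the exceptional components; because those components are complete curves over a field algebraic over a finite field, their Picard contributions are torsion. This is the one genuinely deep input, and is precisely where \cite{MB} and the finite-field hypothesis are indispensable; I regard it as the main obstacle and would quote Moret-Bailly for it rather than reprove it. Granting this, the pairing extends to all Weil divisors by linearity, as claimed.

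For the descent to the quotient I would first reduce to connected fibers. Replacing $A$ by $A' = H^0(\mathcal{Y},\mathcal{O}_{\mathcal{Y}})$ --- a normal Dedekind domain finite over $A$ that again satisfies Hypothesis \ref{hyp:start} --- we may assume $\pi_*\mathcal{O}_{\mathcal{Y}} = \mathcal{O}_{\mathrm{Spec}(A)}$, so Zariski's connectedness theorem makes every fiber $\mathcal{Y}_v$ connected. Next, locally near $\mathcal{Y}_v$ the divisor $\mathcal{Y}_v$ is the divisor of the pullback of a uniformizer of $A$ at $v$, hence principal; since $\langle\ ,\ \rangle_v$ depends only on linear equivalence classes and fibers over distinct points of $\mathrm{Spec}(A)$ are disjoint, $\langle \mathcal{Y}_v, C\rangle_v = 0$ for every $C \in S_v$. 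Thus $\mathbb{Q}\mathcal{Y}_v$ lies in the radical and the pairing descends to $\mathbb{Q}W_v/\mathbb{Q}\mathcal{Y}_v$.

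To prove it is negative definite there, write $\mathcal{Y}_v = \sum_i d_i C_i$ with $d_i > 0$, and note $\langle C_i, C_j\rangle_v \ge 0$ for $i \ne j$ since $C_j$ pulls back to an effective divisor on $C_i^\#$ supported on $C_i \cap C_j$. For $D = \sum_i a_i C_i$ put $f_i = a_i/d_i$; using $\sum_i d_i \langle C_i, C_j\rangle_v = \langle \mathcal{Y}_v, C_j\rangle_v = 0$ one obtains the standard identity
\begin{equation*}
\langle D, D\rangle_v = -\tfrac{1}{2}\sum_{i \ne j} d_i d_j \langle C_i, C_j\rangle_v\,(f_i - f_j)^2 \le 0.
\end{equation*}
Equality forces $f_i = f_j$ whenever $C_i$ and $C_j$ meet, and connectedness of $\mathcal{Y}_v$ then forces all $f_i$ equal, i.e. $D \in \mathbb{Q}\mathcal{Y}_v$; this is exactly negative definiteness on the quotient.

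Finally, for \eqref{eq:fiberint} I would identify $\langle \mathcal{T}, \mathcal{Y}_v\rangle_v$ with the total degree $\deg_{k(v)}\!\big(\mathcal{O}_{\mathcal{Y}}(\mathcal{T})|_{\mathcal{Y}_v}\big)$ of the line bundle on the proper fiber curve, via the multiplicity formula $\deg_{\mathcal{Y}_v} L = \sum_i d_i \deg_{C_i^\#}(L|_{C_i^\#})$ for a line bundle on a proper one-dimensional scheme. Because $\mathcal{Y} \to \mathrm{Spec}(A)$ is flat and projective, the Hilbert polynomial $n \mapsto \chi(\mathcal{Y}_v, \mathcal{O}_{\mathcal{Y}}(n\mathcal{T})|_{\mathcal{Y}_v})$ is independent of $v$; comparing its linear term with that of the generic fiber through Riemann--Roch on curves yields $\deg_{k(v)}(\mathcal{O}(\mathcal{T})|_{\mathcal{Y}_v}) = \deg_K(\mathcal{O}(\mathcal{T})|_Y) = \deg_K(T)$, since $\mathcal{T}$ restricts to $T$ on $Y$. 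Assembling the three parts completes the proof.
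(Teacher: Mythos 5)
Your proposal is correct in substance, but it takes a genuinely different route from the paper, whose ``proof'' of this lemma is essentially a string of citations: the $\mathbb{Q}$-Cartier property is quoted from \cite[Lemme 3.3]{MB}, the negative definiteness of (\ref{eq:negdef}) is ``indicated after (3.5.4)'' of \cite{MB} with details deferred to \cite{Sz} and \cite{Fulton}, and (\ref{eq:fiberint}) is quoted from \cite[\S 3.5]{MB}. You instead supply the arguments, invoking Moret-Bailly only for the torsionness of the local divisor class groups $\mathrm{Cl}(\mathcal{O}_{\mathcal{Y},y})$ — which is indeed the one place where Hypothesis \ref{hyp:start}(i) and the resolution-of-singularities input are indispensable, so deferring there is reasonable. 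Your Zariski-type identity $\langle D,D\rangle_v = -\tfrac12\sum_{i\ne j} d_i d_j \langle C_i,C_j\rangle_v (f_i-f_j)^2$ is the standard proof of semi-definiteness and identification of the radical, and your Hilbert-polynomial argument for (\ref{eq:fiberint}) is the standard local constancy of fibral degree in a flat projective family; both match what the cited sources do. Two caveats. First, the identity silently uses the symmetry $\langle C_i,C_j\rangle_v = \langle C_j,C_i\rangle_v$ of the extended pairing on the possibly non-regular surface $\mathcal{Y}$; this is true (it is Mumford's pairing), but the pairing of Definition \ref{def:inter} is asymmetric in its two arguments, so a sentence justifying symmetry for distinct fibral components is needed before the computation. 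Second, your Stein-factorization reduction replacing $A$ by $H^0(\mathcal{Y},\mathcal{O}_{\mathcal{Y}})$ is the right move, but note it changes the indexing of fibers: if $\mathcal{Y}_v$ is disconnected over the original $v$, the pairing on $\mathbb{Q}W_v/\mathbb{Q}\mathcal{Y}_v$ is only negative semi-definite, its radical being spanned by the classes of the several connected components (the fibers over the points of $\mathrm{Spec}(A')$ above $v$). So what you prove is definiteness modulo that larger radical; the lemma as literally stated requires the fibers themselves to be connected, a point the paper glosses over and your argument correctly surfaces rather than resolves.
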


  \begin{proof}  The first assertion is shown in \cite[Lemme 3.3]{MB}. Since $\langle E, C \rangle_v$
  is bilinear over Cartier divisors $E$, it follows that we can extend this pairing to all
  Weil divisors $E$.   The proof of the second assertion concerning (\ref{eq:negdef}) is indicated immediately
  after \cite[eq. (3.5.4)]{MB}. For further details, see  \cite[exp. 1, Prop. 2.6]{Sz} and \cite[\S 2.4, Appendices A.1 and A.2]{Fulton}.  The last assertion is from
  \cite[\S 3.5]{MB}.
  \end{proof}

Note that if $\mathcal{Y}$ is not regular, the extension of $\langle \ , \ \rangle_v$ described in \ref{lem:okay}
  is different in general from the proper intersection pairing considered by Artin in
  \cite[\S 2]{Artin}.

\begin{lemma}
\label{lem:ample}
Suppose $\mathcal{D}$ is an ample Cartier divisor on $\mathcal{Y}$ and that $\mathcal{E}$
is an effective horizontal Cartier divisor.  Then $\mathcal{D} + \mathcal{E}$ is ample.
\end{lemma}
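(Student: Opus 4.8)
The plan is to deduce the ampleness of $\mathcal{O}_{\mathcal{Y}}(\mathcal{D}+\mathcal{E})$ from a fiberwise statement, exploiting that the fibers of $\mathcal{Y}\to\mathrm{Spec}(A)$ are one-dimensional, where ampleness becomes a question of degrees. First I would use two standard reductions. Since the structure morphism $p:\mathcal{Y}\to\mathrm{Spec}(A)$ has affine target, a line bundle on $\mathcal{Y}$ is ample precisely when it is ample relative to $p$ (see \cite[II.7]{H}). Since $p$ is moreover projective, hence proper, relative ampleness of $\mathcal{O}_{\mathcal{Y}}(\mathcal{D}+\mathcal{E})$ is equivalent to the ampleness of its restriction to every fiber of $p$ (the fiberwise criterion for relative ampleness of a proper morphism over a Noetherian base, EGA III, 4.7.1). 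It therefore suffices to show that $\mathcal{D}+\mathcal{E}$ restricts to an ample divisor on the generic fiber $Y$ and on each closed fiber $\mathcal{Y}_v$.

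On each fiber I would check positivity of degrees component by component, phrased through the pairing of Definition \ref{def:inter}. For a closed point $v$, the restriction of the ample divisor $\mathcal{D}$ to the closed subscheme $\mathcal{Y}_v$ is again ample, so $\langle \mathcal{D},C\rangle_v>0$ for every $C\in S_v$. Because $\mathcal{E}$ is horizontal, it contains no generic point of $\mathcal{Y}_v$ and so meets $\mathcal{Y}_v$ properly; moreover $\mathcal{Y}$ is Cohen--Macaulay (as noted in the proof of Lemma \ref{lem:allright}) and flat over the regular ring $A$, so each $\mathcal{Y}_v$ is Cohen--Macaulay and in particular has no embedded points. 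Hence a local nonzerodivisor defining $\mathcal{E}$ stays a nonzerodivisor on $\mathcal{Y}_v$, so $\mathcal{E}|_{\mathcal{Y}_v}$ is an effective Cartier divisor and $\langle \mathcal{E},C\rangle_v=\mathrm{deg}_{k(v)}\,i^\ast\mathcal{O}_{\mathcal{Y}}(\mathcal{E})\ge 0$ for every $C\in S_v$, since $i^\ast\mathcal{O}_{\mathcal{Y}}(\mathcal{E})$ is the class of an effective divisor on the normalization $C^\#$. Adding these gives $\langle \mathcal{D}+\mathcal{E},C\rangle_v>0$ for all $C\in S_v$. For the generic fiber, $Y$ is an irreducible regular curve over $K$ with $\mathcal{D}|_Y$ ample and $\mathcal{E}|_Y$ effective, so $\mathrm{deg}_K((\mathcal{D}+\mathcal{E})|_Y)>0$.

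To finish, I would invoke the criterion that a line bundle on a projective curve over a field is ample if and only if it has positive degree on each irreducible component (Nakai--Moishezon for curves): the inequalities above are exactly this condition for $\mathcal{D}+\mathcal{E}$ on $\mathcal{Y}_v$ and on $Y$, so these restrictions are ample, and the two reductions of the first paragraph then yield that $\mathcal{O}_{\mathcal{Y}}(\mathcal{D}+\mathcal{E})$ is ample. I expect the main obstacle to be the treatment of the closed fibers, specifically the verification that $\mathcal{E}|_{\mathcal{Y}_v}$ is a genuine effective Cartier divisor so that $\langle \mathcal{E},C\rangle_v\ge 0$; this is where horizontality of $\mathcal{E}$ together with the Cohen--Macaulayness and flatness of $\mathcal{Y}$ are essential. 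This route also deliberately avoids a direct two-dimensional Nakai--Moishezon argument on the arithmetic surface $\mathcal{Y}$, which would force one to control a self-intersection number in mixed characteristic.
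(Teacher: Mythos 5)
Your argument is correct, but it takes a genuinely different route from the paper's. The paper stays entirely cohomological: it invokes the criterion of \cite[Prop. III.5.3]{H} and tensors the sequence $0 \to O_{\mathcal{Y}}(-n\mathcal{E}) \to O_{\mathcal{Y}} \to O_{n\mathcal{E}} \to 0$ with $\mathcal{F}\otimes O_{\mathcal{Y}}(\mathcal{D}+\mathcal{E})^{\otimes n}$; the only role of horizontality is that $n\mathcal{E}$ is finite over $\mathrm{Spec}(A)$, hence affine, so the quotient term has no higher cohomology and the vanishing for $\mathcal{D}$ transfers to $\mathcal{D}+\mathcal{E}$. You instead reduce to the fibers (ample $=$ relatively ample over the affine base, then the fiberwise criterion EGA III 4.7.1 over the Noetherian base $\mathrm{Spec}(A)$) and apply the degree criterion on one-dimensional projective schemes; here horizontality enters differently, namely via the fact that $\mathcal{E}$ contains no vertical component, so $\langle\mathcal{E},C\rangle_v\ge 0$. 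Both uses of horizontality are essential in their respective arguments, and the paper's closing remark about vertical $\mathcal{E}$ failing is exactly the failure of your inequality $\langle\mathcal{E},C\rangle_v\ge 0$. Two small comments on your version: the step you flag as the main obstacle is easier than you make it --- you do not need $\mathcal{E}|_{\mathcal{Y}_v}$ to be an effective Cartier divisor on $\mathcal{Y}_v$, only that $C\not\subset\mathrm{Supp}(\mathcal{E})$, so that the canonical section of $O_{\mathcal{Y}}(\mathcal{E})$ pulls back to a nonzero section on the integral curve $C^{\#}$ and hence has nonnegative degree; the Cohen--Macaulay discussion can be dropped. The trade-off is that your proof leans on the fiberwise ampleness theorem, a deeper input than anything the paper uses here, in exchange for making the geometric content (positivity of degrees on fibral components, in the same pairing $\langle\ ,\ \rangle_v$ used throughout the paper) explicit.
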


\begin{proof}  By \cite[Prop. III.5.3]{H},
$\mathcal{D} + \mathcal{E}$ is ample if and only if  for each coherent sheaf $\mathcal{F}$ on $\mathcal{Y}$,
there is an integer $n_0(\mathcal{F}) > 0 $ such that
$${\rm H}^i(\mathcal{Y},\mathcal{F}\otimes O_{\mathcal{Y}}(\mathcal{D} + \mathcal{E})^{\otimes n}) = 0$$
for all $n \ge n_0$ and all $i > 0$.  Consider the long exact cohomology sequence associated to
the exact sequence of sheaves
$$0 \to \mathcal{F} \otimes O_{\mathcal{Y}}(\mathcal{D})^{\otimes n} \to \mathcal{F} \otimes O_{\mathcal{Y}}(\mathcal{D} + \mathcal{E})^{\otimes n} \to
 \left (\mathcal{F} \otimes O_{\mathcal{Y}}(\mathcal{D} + \mathcal{E})^{\otimes n}\right )|_{n\mathcal{E}} \to 0.$$
 Because $n\mathcal{E}$ is horizontal, it is affine, and the higher cohomology of
 coherent sheaves on $n \mathcal{E}$ is trivial.  It follows that $\mathcal{D} + \mathcal{E}$ is ample
  because $\mathcal{D}$ is ample.
  \end{proof}
  Note that if $\mathcal{E}$ is allowed to have vertical components, then $\mathcal{D} + \mathcal{E}$
  might have negative degree on some irreducible vertical component of $\mathcal{Y}$.
  Thus the conclusion of Lemma \ref{lem:ample} need not hold for arbitrary effective
  Cartier divisors $\mathcal{E}$.

We will leave the proof of the following Lemma to the reader.

\begin{lemma}
\label{lem:projsimple}
Suppose $T$ is a finite set of closed points of $\mathbb{P}^m_A$ for some integer $m \ge 1$.
Then there is an integer $n \ge 1$ and a homogenous polynomial $f = f(x_0,\ldots,x_m)$
of degree $n$ in homogenous coordinates $(x_0;\ldots;x_m)$ for $\mathbb{P}^m_A$
such that $f$ does not vanish at any point of $T$.
\end{lemma}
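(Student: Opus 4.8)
The plan is to translate the statement into commutative algebra and apply the homogeneous (graded) prime avoidance lemma. Write $R = A[x_0,\ldots,x_m]$ for the homogeneous coordinate ring of $\mathbb{P}^m_A$, graded in the usual way with each $x_i$ of degree one, and let $R_+ = (x_0,\ldots,x_m)$ be the irrelevant ideal. Each closed point $t \in T$ corresponds to a homogeneous prime ideal $\mathfrak{p}_t \subset R$ belonging to $\mathrm{Proj}(R)$, and by the very definition of $\mathrm{Proj}$ one has $R_+ \not\subseteq \mathfrak{p}_t$; equivalently, at least one coordinate fails to vanish at $t$. A homogeneous form $f$ of degree $n$ does not vanish at $t$ precisely when $f \notin \mathfrak{p}_t$. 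Thus the Lemma is equivalent to the assertion that there is a homogeneous element of $R_+$, of some positive degree $n$, lying in none of the finitely many primes $\{\mathfrak{p}_t : t \in T\}$.

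First I would record, for each $t \in T$, a single coordinate $x_{i(t)} \in R_+ \setminus \mathfrak{p}_t$, which is available by the remark above. Then I would invoke the graded prime avoidance lemma: given a homogeneous ideal $I$ (here $I = R_+$, generated in positive degree) not contained in any of finitely many primes $\mathfrak{p}_1,\ldots,\mathfrak{p}_r$, there is a single homogeneous element of $I$ lying outside $\bigcup_j \mathfrak{p}_j$. Taking $f$ to be this element and $n = \deg f \ge 1$ yields the conclusion. Note that this argument is purely algebraic and uses neither that $A$ is Dedekind nor Hypothesis \ref{hyp:start}; it holds over an arbitrary commutative ring.

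The one point requiring care, and the only real obstacle, is that ordinary prime avoidance produces an element that need not be homogeneous, whereas we need a form of a single well-defined degree $n$. This is exactly the content of the homogeneous refinement: starting from the homogeneous building blocks $x_{i(t)}$ (of degree one, or, in the general inductive step, of various degrees $d_t$), one replaces each by a suitable power to arrange a common degree $D$, using that $\mathfrak{p}_t$ is prime to preserve the non-membership $x_{i(t)}^{k} \notin \mathfrak{p}_t$, and then forms the standard prime-avoidance combination, which is now homogeneous of degree $D$ and lies in $R_+$. I would therefore either cite the homogeneous prime avoidance lemma directly or reproduce this short homogenization in a single line.

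As an alternative I would note a geometric proof. Let $Z \subset \mathbb{P}^m_A$ be the reduced closed subscheme supported on the finite set $T$, with ideal sheaf $\mathcal{I}_Z$, and use the exact sequence $0 \to \mathcal{I}_Z(n) \to \mathcal{O}(n) \to \mathcal{O}_Z(n) \to 0$. Since $\mathrm{Spec}(A)$ is affine and noetherian, Serre vanishing gives $\mathrm{H}^1(\mathbb{P}^m_A, \mathcal{I}_Z(n)) = 0$ for $n \gg 0$, so the restriction $\mathrm{H}^0(\mathcal{O}(n)) \to \mathrm{H}^0(\mathcal{O}_Z(n)) = \prod_{t \in T} k(t)$ is surjective; choosing $f$ mapping to $(1,\ldots,1)$ produces a degree-$n$ form nonvanishing at every $t$. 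I would keep the prime-avoidance argument as the main one, since it is the more elementary and matches the spirit in which the Lemma is left to the reader.
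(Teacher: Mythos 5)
The paper gives no proof of this Lemma (it is explicitly left to the reader), so there is nothing to compare against; your argument is correct and fills the gap. Reducing to graded prime avoidance for the irrelevant ideal $R_+$ against the finitely many homogeneous primes $\mathfrak{p}_t$ is exactly the right move, and the homogenization-by-powers refinement you describe is the standard one. The only point I would tighten is that the building blocks of the final combination cannot be the single coordinates $x_{i(t)}$ alone: for the sum to avoid every $\mathfrak{p}_t$ you need, for each $t$, a homogeneous element lying in $\mathfrak{p}_s$ for all $s \ne t$ but not in $\mathfrak{p}_t$ (obtained, e.g., as $x_{i(t)}\prod_{s\ne t} h_{t,s}$ with $h_{t,s} \in \mathfrak{p}_s \setminus \mathfrak{p}_t$, which exists since distinct closed points give incomparable primes); you correctly defer this to the standard lemma, so it is only a presentational remark. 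Your alternative via Serre vanishing and the surjection $H^0(\mathcal{O}(n)) \to \prod_{t\in T} k(t)$ is also valid, since $A$ is noetherian; it uses more machinery but generalizes to imposing arbitrary values at the points of $T$.
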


\begin{lemma}
\label{lem:nicehoriz}
There is an effective horizontal divisor $\mathcal{D}$ on $\mathcal{Y}$
which is  very ample relative to the structure morphism $\mathcal{Y} \to \mathrm{Spec}(A)$.  Each such
$\mathcal{D}$ intersects every irreducible component of a fiber of $\mathcal{Y}$
over $\mathrm{Spec}(A)$.
\end{lemma}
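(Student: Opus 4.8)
The plan is to realize $\mathcal{D}$ as the zero scheme of a carefully chosen global section of a high tensor power of a relatively very ample sheaf; the entire difficulty is to force that zero scheme to contain no component of any fibre. Since $\mathcal{Y}\to\mathrm{Spec}(A)$ is projective, I would fix a relatively very ample $O_{\mathcal{Y}}(1)$ and a closed immersion $\mathcal{Y}\hookrightarrow\mathbb{P}^m_A$. For a homogeneous form $f\in A[x_0,\dots,x_m]$ whose restriction to the generic fibre $Y$ is nonzero, the zero scheme $Z(f)\cap\mathcal{Y}$ is an effective Cartier divisor in the relatively very ample system $|O_{\mathcal{Y}}(a)|$, where $a=\deg f$; it is horizontal precisely when $f$ vanishes along no irreducible component of any fibre. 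Because $\mathcal{Y}$ is integral of dimension two, $Z(f)\cap\mathcal{Y}$ has only finitely many irreducible components, so for each fixed $f$ only finitely many fibre components $C$ are contained in it; call this finite set $B(f)$. Thus a single $f$ is spoiled by at most finitely many fibres, but a priori by nonzero ones, and the genuine obstacle is that there are infinitely many fibres to control at once, so one cannot simply apply Lemma~\ref{lem:projsimple} to avoid a point on each component.

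To reduce to finitely many conditions I would use a pencil. Starting from any $f$ with $f|_Y\neq 0$ (for instance a power of a coordinate), write $B(f)=\{C_1,\dots,C_r\}$, choose a closed point $z_i\in C_i$ on each, and apply Lemma~\ref{lem:projsimple} to produce a homogeneous $g$ of degree $b$ not vanishing at any $z_i$. Then $B(g)$ is again finite and $B(f)\cap B(g)=\emptyset$, so on every component of every fibre at least one of $s_f,s_g$ is nonzero. Replacing $g$ by another form nonvanishing at the $z_i$ if necessary, I may arrange that $h=s_f^{\,b}/s_g^{\,a}$ is nonconstant on $Y$, so that $[s_f^{\,b}:s_g^{\,a}]$ defines a dominant, generically finite rational map $\phi:\mathcal{Y}\dashrightarrow\mathbb{P}^1_A$. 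The key point is that a dominant generically finite map of integral two-dimensional schemes contracts only finitely many curves, by upper semicontinuity of fibre dimension; hence $\phi$ is nonconstant on all but finitely many fibre components, and on the finitely many contracted components it takes finitely many values $c_1,\dots,c_p\in\mathbb{P}^1$ (among them the values $0$ and $\infty$ arising from $B(g)$ and $B(f)$).

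Now I would apply Lemma~\ref{lem:projsimple} on $\mathbb{P}^1_A$ to the finite set $\{c_1,\dots,c_p\}$ to obtain a binary form $G(y_0,y_1)$ of degree $d$ not vanishing at any $c_k$, and set $s=G(s_f^{\,b},s_g^{\,a})$, a section of $O_{\mathcal{Y}}(abd)$. On a contracted component $C$ one has $(s_f^{\,b}|_C,s_g^{\,a}|_C)=\rho\cdot c_C$ for a nonzero local factor $\rho$, so $s|_C=\rho^{\,d}\,G(c_C)\neq 0$; on a noncontracted $C$ the image of $\phi|_C$ is dense in $\mathbb{P}^1_{k(v)}$ and so meets $\{G\neq 0\}$, whence $s|_C\neq 0$ as well. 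Therefore $Z(s)\cap\mathcal{Y}$ contains no fibre component, and $\mathcal{D}:=Z(s)\cap\mathcal{Y}$ is the required effective, horizontal divisor lying in $|O_{\mathcal{Y}}(abd)|$, which is relatively very ample as a tensor power of $O_{\mathcal{Y}}(1)$.

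For the final assertion, let $\mathcal{D}$ be any effective horizontal divisor that is relatively very ample and let $C\in S_v$ be an irreducible component of a fibre. Restricting to $\mathcal{Y}_v$, the sheaf $O_{\mathcal{Y}}(\mathcal{D})|_{\mathcal{Y}_v}$ is ample, hence $O_{\mathcal{Y}}(\mathcal{D})|_C$ is ample on the projective curve $C$, so $\langle\mathcal{D},C\rangle_v=\deg_{k(v)} i^*O_{\mathcal{Y}}(\mathcal{D})>0$ with $i:C^\#\to\mathcal{Y}$ as in Definition~\ref{def:inter}. Since $\mathcal{D}$ is horizontal it does not contain $C$, so $\mathcal{D}\cap C$ is the zero locus on $C$ of a section of a line bundle of positive degree and is therefore nonempty. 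Thus $\mathcal{D}$ meets every component of every fibre. The main obstacle throughout is the passage from the pointwise statement of Lemma~\ref{lem:projsimple} to a condition that must hold simultaneously over the infinitely many fibres of $\mathcal{Y}\to\mathrm{Spec}(A)$; the pencil $\phi$, together with the finiteness of its contracted curves, is exactly what reduces this to finitely many points and makes the transition possible.
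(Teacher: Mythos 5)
Your construction diverges from the paper's (which starts from an arbitrary effective very ample divisor, uses Lemma~\ref{lem:projsimple} once to move it off a single finite set of points --- one on each component of each of the \emph{finitely many reducible} fibres --- and then absorbs any remaining whole irreducible fibres into a principal divisor $\mathrm{div}_{\mathcal{Y}}(g)$, $g\in K^*$, using the hypothesis that $\mathrm{Pic}(A)$ is torsion). Your pencil argument is an interesting alternative that avoids the Picard-group step, and the finiteness of the set of contracted fibre components (via the fibre-dimension argument on a resolution of the indeterminacy of $\phi$) is correct. But there is a genuine gap at the last step. The form $G$ supplied by Lemma~\ref{lem:projsimple} is constrained only at the finitely many closed points $c_1,\dots,c_p$, which lie over finitely many maximal ideals of $A$. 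For any other maximal ideal $v$, nothing prevents \emph{all} coefficients of $G$ from lying in $v$; for instance with $A=\mathbb{Z}$ and all $c_k$ over $2$ and $3$, the form $5H$ satisfies the conclusion of Lemma~\ref{lem:projsimple} whenever $H$ does. In that case $G\bmod v\equiv 0$, the set $\{G\neq 0\}$ is disjoint from $\mathbb{P}^1_{k(v)}$, and your argument ``the image of $\phi|_C$ is dense in $\mathbb{P}^1_{k(v)}$ and so meets $\{G\neq 0\}$'' breaks down: $s=G(s_f^{\,b},s_g^{\,a})$ then vanishes identically on the entire fibre $\mathcal{Y}_v$, and $Z(s)\cap\mathcal{Y}$ is not horizontal. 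So the ``finitely many conditions'' you reduce to are not in fact sufficient; you are still imposing one nontrivial condition ($G\bmod v\not\equiv 0$) at every one of the infinitely many maximal ideals.

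The gap is repairable: you need $G$ to have unit content, i.e.\ coefficients generating the unit ideal of $A$, in addition to not vanishing at the $c_k$; this is a strengthening of Lemma~\ref{lem:projsimple} that must be stated and proved (it does not follow formally from the lemma as given). Alternatively you can fall back on the paper's device: accept that the zero scheme of your section may contain whole fibres over finitely many additional primes is not even needed --- rather, note that any full irreducible fibre contained in the divisor is, up to torsion in $\mathrm{Pic}(A)$, the divisor of an element of $K^*$ and can be subtracted off, which is exactly how the paper handles all but the reducible fibres. Your proof of the second assertion of the lemma is fine and agrees with the paper's.
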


\begin{proof} Since we have assumed $\mathcal{Y}$ is projective,
there is an effective very ample Cartier divisor $\mathcal{D}$ on $\mathcal{Y}$.
Let $T$ be
a finite set of closed points of $\mathcal{Y}$ which contains a point
on every irreducible component of every reducible fiber of $\mathcal{Y}$
over $\mathrm{Spec}(A)$.  Lemma \ref{lem:projsimple} implies that there
is an effective very ample Cartier divisor which is linearly equivalent to $n\mathcal{D}$
for some $n > 0$
and which contains no point of $T$;  we replace $\mathcal{D}$ by this
divisor.  Now $\mathcal{D}$ can contain no irreducible component
of a reducible vertical fiber of $\mathcal{Y}$ over $\mathrm{Spec}(A)$.
Thus the vertical part of $\mathcal{D}$ is an integral combination
of fibers of $\mathcal{Y}$.
Since $\mathrm{Pic}(A)$ is finite by assumption, we can
now replace $\mathcal{D}$ by $d\mathcal{D} + \mathrm{div}_{\mathcal{Y}}(g)$
for some $g$ in the fraction field $K$ of $A$ and some $d > 0$ to be able to assume
that $\mathcal{D}$ is horizontal, effective and very ample.   Since $\mathcal{D}$
is effective and ample when restricted to every irreducible component of a fiber
of $\mathcal{Y}_v$, it must intersect each such component.
\end{proof}

\section{An application of work of Moret-Bailly}
\label{s:=MBsec}
\setcounter{equation}{0}

 \begin{proposition}
 \label{prop:divprop}
Suppose $\mathcal{Y}$ is connected.  Let $\mathcal{D}$ be a divisor with the properties stated in
Lemma \ref{lem:nicehoriz}.   Let $M$ be the finite set of maximal ideals $v \in \mathrm{Spec}(A)$
such that $\mathcal{Y}_v$ has more than one irreducible component.
 For each $v \in M$, choose an element $C(v)$ of $S_v$. Suppose
  $$\mathcal{Y}_v = \sum_{C \in S_v} n_C C$$
as Weil divisors  for some integers $n_C > 0$.  There is a non-constant function $f$ in the function field $K(\mathcal{Y})$ having the following properties.
 The divisor of $f$ on $\mathcal{Y}$ has the form
 \begin{equation}
 \label{eq:yupf}
 \mathrm{div}_{\mathcal{Y}}(f) = \mathcal{D}_1(f)  - n\mathcal{D} + \sum_{v \in M} E_v \end{equation}
where $0 < n \in \mathbb{Z}$, $E_v$ is a Cartier divisor supported on $\mathcal{Y}_v$ for $v \in M$ and the following is true:
 \begin{enumerate}
 \item[i.]  $\mathcal{D}_1(f)$ is an effective, horizontal Cartier divisor and is equal to the
 Zariski closure of its general fiber $D_1(f)$.   The intersection $\mathcal{D}_1(f) \cap \mathcal{D}$
 is empty.
  \item[ii.]  Suppose $v \in M$ and $C \in S_v$. Then
 \begin{equation}
 \label{eq:urp}  \langle n \mathcal{D},C\rangle_v - \langle \mathcal{D}_1(f),C \rangle_v =  \langle E_v, C\rangle_v \in \mathbb{Z}.
 \end{equation}
 \item[iii.]  Let $m$ be the degree of
 $f$ on the general fiber $Y$.    If $v \in M$ and $C \in S_v$ then
  \begin{equation}
 \label{eq:bounds}
 0 < \langle n\mathcal{D},n_C C \rangle_v < m.
 \end{equation}
\item[iv.] If $v \in M$, the unique component of the special fiber $\mathcal{Y}_v$ which $\mathcal{D}_1(f)$ intersects  is the $C(v)$ we have chosen.
 For $C \in S_v$ one has
\begin{equation}
\label{eq:crank1}
 \langle \mathcal{D}_1(f),C \rangle_v = 0 \quad \mathrm{if}\quad C \ne C(v)\quad \mathrm{and}
 \quad \langle \mathcal{D}_1(f),n_{C(v)} C(v)  \rangle_v = m.
 \end{equation}
\item[v.] For $v \in M$ one has
 \begin{equation}
 \label{eq:crank}
 \langle E_v, n_C C \rangle_v > 0 \quad \mathrm{if} \quad C(v) \ne C \in S_v \quad  \mathrm{and}\quad
  \langle E_v, n_{C(v)} C(v)\rangle_v
 < 0.
 \end{equation}
 \end{enumerate}
 \end{proposition}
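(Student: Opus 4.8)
The plan is to reduce the five conditions to a single existence statement and then to invoke Moret-Bailly. First I would record that most of the conditions are formal. Write $\delta = \deg_K(D)$ and set $m = n\delta$, so that $m$ is automatically the degree of $f$ on $Y$ once the horizontal polar part of $f$ is $n\mathcal{D}$. Condition (ii) is forced: a principal divisor pairs to $0$ with every complete vertical curve $C \in S_v$, so intersecting $\mathrm{div}_{\mathcal{Y}}(f) = \mathcal{D}_1(f) - n\mathcal{D} + \sum_{v} E_v$ with $C$ and using that $E_{v'}$ is disjoint from $C$ for $v' \neq v$ gives exactly (\ref{eq:urp}), with all terms integers. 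Condition (iii) follows from Lemma \ref{lem:nicehoriz} and (\ref{eq:fiberint}): since $\mathcal{D}$ meets every component, $\langle n\mathcal{D}, n_C C\rangle_v = n\,n_C\langle\mathcal{D},C\rangle_v > 0$, while $\sum_{C\in S_v} n\,n_C\langle\mathcal{D},C\rangle_v = n\langle\mathcal{D},\mathcal{Y}_v\rangle_v = n\delta = m$; as $v\in M$ contributes at least two positive summands, each lies strictly between $0$ and $m$. Finally (v) is a consequence of (ii), (iii) and (iv): substituting (\ref{eq:crank1}) into (\ref{eq:urp}) gives $\langle E_v, n_CC\rangle_v = \langle n\mathcal{D}, n_CC\rangle_v > 0$ for $C \neq C(v)$, and $\langle E_v, n_{C(v)}C(v)\rangle_v = \langle n\mathcal{D}, n_{C(v)}C(v)\rangle_v - m < 0$. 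Thus it suffices to produce a non-constant $f$ whose divisor has the stated shape, with $\mathcal{D}_1(f)$ effective, horizontal, disjoint from $\mathcal{D}$, meeting each reducible fiber $\mathcal{Y}_v$ $(v\in M)$ only along $C(v)$, and with vertical part supported on the fibers over $M$.

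Next I would build the horizontal divisor on the generic fiber and control its specialization. I would choose $n$ large enough that $n\mathcal{D}$ is very ample and that the system $|nD|$ on the regular curve $Y$ is base-point free and separates points. For each $v\in M$ I would pick closed points of $C(v)$ lying on no other component of $\mathcal{Y}_v$, off $\mathcal{D}$, and off the finite singular locus of $\mathcal{Y}$; by very ampleness and a moving-lemma argument I would then select an effective $D_1 \in |nD|$ on $Y$, disjoint from $D$, whose Zariski closure $\mathcal{D}_1 = \overline{D_1}$ meets $\mathcal{Y}_v$ only along $C(v)$, is disjoint from $\mathcal{D}$, and avoids $\mathrm{Sing}(\mathcal{Y})$. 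Since $\mathcal{D}_1$ then lies in the regular locus, it is Cartier. The identities $\langle \mathcal{D}_1, n_{C(v)}C(v)\rangle_v = m$ and $\langle\mathcal{D}_1, C\rangle_v = 0$ for $C\neq C(v)$ hold automatically from (\ref{eq:fiberint}), giving (iv).

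The crux is to promote the generic linear equivalence $D_1 \sim nD$ to a global function whose vertical part is confined to $M$. As $\mathcal{D}_1$ and $n\mathcal{D}$ agree in the class group of $Y$, any $f_0$ with $\mathrm{div}_Y(f_0) = D_1 - nD$ satisfies $\mathrm{div}_{\mathcal{Y}}(f_0) = \mathcal{D}_1 - n\mathcal{D} + V$ with $V$ vertical. Over an irreducible fiber with $v\notin M$ the component $V_v$ is an integer multiple of the reduced fiber, and modifying $f_0$ by elements of $K^*$ alters it only by integer multiples of the \emph{whole} fiber $\mathcal{Y}_v$; using that $\mathrm{Pic}(A)$ is finite (as in Lemma \ref{lem:nicehoriz}) one can attempt to adjust at all $v\notin M$ simultaneously. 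The obstruction — the divisibility needed to clear $V_v$ entirely over each $v \notin M$ while leaving the fibers over $M$ untouched — is precisely the point at which I would invoke the results of Moret-Bailly \cite{MB} underlying Lemma \ref{lem:okay}, together with Hypothesis \ref{hyp:start}(ii). This is the main obstacle of the proof; everything else is bookkeeping. The outcome is an $f$ with $\mathrm{div}_{\mathcal{Y}}(f) = \mathcal{D}_1 - n\mathcal{D} + \sum_{v\in M} E_v$, each $E_v$ a Cartier divisor supported on $\mathcal{Y}_v$ (being the local part at $v$ of the Cartier divisor $\mathrm{div}_{\mathcal{Y}}(f)+n\mathcal{D}-\mathcal{D}_1$).

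Finally I would assemble the verification. Property (i) holds since $\mathcal{D}_1(f) = \mathcal{D}_1$ is effective, horizontal, Cartier, equal to the closure of its general fiber, and disjoint from $\mathcal{D}$ by construction; (ii) is the principality identity above; (iii), (iv), (v) are the statements already established. Non-constancy of $f$ is immediate because $m = n\delta > 0$. The only genuinely delicate input is the third paragraph; the remainder rests on Lemmas \ref{lem:okay} and \ref{lem:nicehoriz} and the fiber-intersection formula (\ref{eq:fiberint}).
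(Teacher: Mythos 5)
Your reductions of (ii), (iii) and (v) to the shape of $\mathrm{div}_{\mathcal{Y}}(f)$ together with (i) and (iv) are correct and agree with the paper's bookkeeping. The gap is in your second paragraph, which is where the entire content of the proposition lives. You assert that ``by very ampleness and a moving-lemma argument'' one can choose an effective $D_1\in|nD|$ on the generic fiber $Y$ whose Zariski closure meets each reducible fiber $\mathcal{Y}_v$ only along $C(v)$ and is disjoint from $\mathcal{D}$. A linear system on $Y$ is a projective space over $K$, and linear equivalence on $Y$ gives no control over where the closure of a member meets the special fibers; moving $D_1$ inside $|nD|$ moves its reductions in a way you have not constrained. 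There is moreover a concrete obstruction to the naive approach: if you realize $D_1$ as the zero locus of a section $s$ of $O_{\mathcal{Y}}(n\mathcal{D})$ with vertical part $V'$, then for every component $C\ne C(v)$ of $\mathcal{Y}_v$ not contained in $V'$ one has $\langle \overline{D_1},C\rangle_v+\langle V',C\rangle_v=\langle n\mathcal{D},C\rangle_v>0$, so forcing $\overline{D_1}\cap C=\emptyset$ forces a very particular nonzero effective vertical divisor $V'$ to occur inside the zero scheme of $s$; producing a section with exactly that vertical behaviour is not a Bertini or moving statement. This is precisely what Moret-Bailly's Proposition 3.8 supplies: one first twists by a vertical $\mathbb{Q}$-divisor $\Delta$ so that $O_{\mathcal{Y}}(n(\mathcal{D}+\Delta))$ has degree zero on every component in the avoidance set $Z=\mathcal{D}\cup V$, and then uses Hypothesis \ref{hyp:start}(i) (residue fields algebraic over finite fields, so that degree-zero line bundles on those components are torsion) to find a section $t$ generating the stalks of the sheaf along all of $Z$; the twist $\Delta$ is afterwards absorbed into the $E_v$.

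You do invoke \cite{MB}, but only in your third paragraph, for confining the vertical part of $\mathrm{div}_{\mathcal{Y}}(f)$ to the fibers over $M$ --- and that is in fact the easy step: for $v\notin M$ the fiber is irreducible, its contribution is an integer multiple of $\mathcal{Y}_v^{red}$, and Hypothesis \ref{hyp:start}(ii) (torsion of $\mathrm{Pic}(A)$) kills it upon replacing $f$ by $f^{m}\alpha$ for suitable $\alpha\in K^*$, exactly as in the paper. So your proposal attaches the deep arithmetic input to the routine step, while the step that actually requires it is asserted without proof.
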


 \begin{proof}
We use the construction
 given
 in the proof of \cite[Prop. 3.8]{MB}.  To match the notation used in \cite{MB},
 let $\overline{X} = \mathcal{Y}$.  Define $Z$ to be the union of $\mathcal{D}$
 with
 \begin{equation}
 \label{eq:Vdef}
 V = \cup\{\ C\  :  \ v \in M \quad \mathrm{and}\quad C(v) \ne C \in S_v\} .
 \end{equation}
 In the proof of \cite[Prop. 3.8]{MB}, Moret-Bailly
shows there is an integer $n > 0$ and rational linear combination $\Delta$
 of vertical divisors with the following properties:
 \begin{enumerate}
 \item[i.]  The rational divisor $n(\mathcal{D} + \Delta)$
 is a Cartier divisor.
 \item[ii.] Let $\mathcal{L} = O_{\mathcal{Y}}(n(\mathcal{D} + \Delta))$.
 There is a non-zero global section \hbox{$t \in H^0(\mathcal{Y},\mathcal{L})$} such that
 $t$ generates the stalk of $\mathcal{L}$ at all points of $Z$.
 \end{enumerate}

 On viewing $\mathcal{L}$ as a subsheaf of the function field $K(\mathcal{Y})$
 we may  identify $t$ with
 a function $f \in K(\mathcal{Y})$.  Then
 \begin{equation}
 \label{eq:word}
 \mathrm{ord}_w(f) = -\mathrm{ord}_w(n(\mathcal{D} + \Delta))
 \end{equation}
 at all codimension $1$ points $w$ of $\mathcal{Y}$ lying in $Z = \mathcal{D} + V$.
 Since $V$ and $n\Delta$ are fibral, and $Z$ contains $\mathcal{D}$, we conclude that
 $$\mathrm{div}_{\mathcal{Y}}(f) = \mathcal{D}_1(f)  - n\mathcal{D} + \mathcal{T}$$
 where $\mathcal{T}$ is a fibral divisor and $\mathcal{D}_1(f)$ is an effective, horizontal  divisor having no
 irreducible components in common with $\mathcal{D}$.  It follows that $\mathcal{D}_1(f)$ is the
 Zariski closure of its general fiber $D_1(f)$.

 Write $\mathcal{T}$ as a finite integral combination of irreducible components of
 the $\mathcal{Y}_v$ as $v$ ranges over $\mathrm{Spec}(A)$.  If $v \not \in M$, then $\mathcal{Y}_v^{red}$ is irreducible,
 and a non-zero integral multiple of $\mathcal{Y}_v^{red}$ is the divisor of a non-zero element of $K$ since
  $\mathrm{Pic}(A)$ is torsion by Hypothesis \ref{hyp:start}.  By Lemma \ref{lem:okay}, a non-zero integral
  multiple of a Weil divisor on $\mathcal{Y}$ is a Cartier divisor.  Therefore on replacing  $f$ by $f^m \alpha$ for some $0 \ne \alpha \in K$ and some sufficiently divisible integer $m > 0$, we will have an equality
  of the form in (\ref{eq:yupf}), with $n\mathcal{D}$ and $n\Delta$ being Cartier divisors.

  Since  $t$ generates the stalk of $\mathcal{L}
 = O_{\mathcal{Y}}(n(\mathcal{D} + \Delta))$ at each point of $Z$, $f^{-1}$ is a local
 equation for the Cartier divisor $n(\mathcal{D} + \Delta)$ at each point of $Z$.  From
 (\ref{eq:yupf}), $f^{-1}$ is also a local equation for
 $-\mathcal{D}_1(f)  +  n\mathcal{D} + \sum_{v \in M} E_v$ at all points of $\mathcal{Y}$.  Recall that
 the $E_v$ and $n \Delta$ are vertical Cartier divisors, and  $\mathcal{D}_1(f)$ is a horizontal Cartier divisor with no irreducible components in common with
 $\mathcal{D} \subset Z$. Therefore if $c \in \mathcal{D}_1(f) \cap \mathcal{D} \subset Z$, a local equation  for $\mathcal{D}_1(f)$ in a neighborhood of $c$ would have to be a local equation for $\sum_{v \in M} E_v -n\Delta$. These Cartier divisors
 would then agree in an open neighborhood of $c$ in $\mathcal{Y}$, which  is impossible since one is vertical and the other is horizontal.
Therefore $\mathcal{D}_1(f) \cap \mathcal{D} = \emptyset$, which
completes the proof of (i).

 Statement (ii) is clear from (\ref{eq:yupf}) and the fact that since $\mathrm{div}_{\mathcal{Y}}(f)$ is principal, $\langle \mathrm{div}_{\mathcal{Y}}(f),C\rangle_v = 0$.

 Concerning (iii), we know by (\ref{eq:fiberint}) that
 \begin{equation}
 \label{eq:sumequation}
  \sum_{C \in S_v} \langle n\mathcal{D},n_C C\rangle_v = \langle n\mathcal{D}, \mathcal{Y}_v\rangle_v =  \mathrm{deg}(nD) = m
 \end{equation}
 since $nD$ is the polar part of the divisor $\mathrm{div}_Y(f)$ on the general fiber $Y$ of $\mathcal{Y}$.
We have $\langle  \mathcal{D},C\rangle_v > 0$ for all $C \in S_v$ because $\mathcal{D}$ intersects
each such $C$ by Lemma \ref{lem:nicehoriz}.  Therefore (\ref{eq:bounds}) follows from (\ref{eq:sumequation}) and the fact that $S_v$ has more than one element if
$v \in M$.

To show (iv), suppose $v \in M$ and that $C(v) \ne C \in S_v$.
Since $C \subset Z$ and $C$ is fibral, (\ref{eq:yupf}) and (\ref{eq:word}) imply that the multiplicities of $C$ in $-n(\mathcal{D} + \Delta)$, $-n\Delta$,
$\mathrm{div}_{\mathcal{Y}}(f)$ and $E_v$ must be equal.  We conclude from (\ref{eq:yupf}) and (\ref{eq:word}) that if $\mathcal{D}_1(f)$
intersects a point $c \in C\subset Z$, then $t$ would not be a local generator of the stalk of $\mathcal{L}$
at $c$.  This contradicts condition (ii) in the definition of $t$ following equation (\ref{eq:Vdef}).  Thus $\mathcal{D}_1(f)$ can only intersect the
component $C(v)$ of $\mathcal{Y}_v$.   Now (\ref{eq:fiberint})
 shows
$$\langle \mathcal{D}_1(f),n_{C(v)} C(v)\rangle_v = \langle \mathcal{D}_1(f),\mathcal{Y}_v\rangle_v = \mathrm{deg}(D_1(f)) = m.$$
This shows (\ref{eq:crank1}) and completes the proof of (iv).

 Finally, the inequalities in (\ref{eq:crank}) of part  (v) are a consequence of (\ref{eq:urp}),  (\ref{eq:bounds}) and
 (\ref{eq:crank1}).
 \end{proof}

\section{Controlling vertical divisors}
\label{s:Rumcapdiv}
\setcounter{equation}{0}

\begin{lemma}
\label{lem:fixit}  Let $\mathcal{D}$, $M$ and $S_v$ be as in Proposition \ref{prop:divprop}.  Suppose
$M' \subset M$ and that $C_0(v) \in S_v$ for $v \in M'$.  Then
 there is a function $h \in K(\mathcal{Y})$ such that
 \begin{equation}
 \label{eq:divisornow}
 \mathrm{div}_{\mathcal{Y}}(h) = \mathcal{D}_1 - \mathcal{D}_2 + \sum_{v \in M'} E_v
 \end{equation}
 where $\mathcal{D}_1$ and $\mathcal{D}_2$ are horizontal effective
 divisors which do not intersect, $\mathcal{D}_2$ has the same support as $\mathcal{D}$, $E_v$ is supported on
 $\mathcal{Y}_v$, and for $v \in M'$ and $C' \in S_v$
 we have
 \begin{equation}
 \label{eq:crank2}
 \langle E_v, C' \rangle_v > 0 \quad \mathrm{if} \quad C_0(v) \ne C' \in S_v \quad  \mathrm{and}\quad
  \langle E_v, C_0(v)\rangle_v
 < 0.
 \end{equation}
 \end{lemma}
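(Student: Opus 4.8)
The plan is to apply Proposition \ref{prop:divprop} to an entire family of choices of distinguished components and then to combine the resulting functions in a product with carefully chosen positive exponents. I fix the distinguished component to be $C_0(v)$ at every $v \in M'$, and let $\sigma$ range over the finite set $\Sigma = \prod_{v \in M \setminus M'} S_v$ of choices of distinguished components at the remaining places of $M$; write $f_\sigma$ for a function produced by Proposition \ref{prop:divprop} for the choice $C(v) = C_0(v)$ ($v \in M'$), $C(v) = \sigma(v)$ ($v \in M \setminus M'$). Replacing each $f_\sigma$ by a suitable power, I may assume the integer $n$ in (\ref{eq:yupf}) and the general fiber degree $m$ are the same for every $\sigma$; this multiplies each $\mathrm{div}_{\mathcal{Y}}(f_\sigma)$ by a positive integer and so preserves all conclusions of Proposition \ref{prop:divprop}. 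I then look for $h$ of the form $h = \gamma \prod_{\sigma \in \Sigma} f_\sigma^{\lambda_\sigma}$ with $\gamma \in K$ and positive integers $\lambda_\sigma$. For any such product the horizontal part of $\mathrm{div}_{\mathcal{Y}}(h)$ is $\sum_\sigma \lambda_\sigma \mathcal{D}_1(f_\sigma) - \bigl(\sum_\sigma \lambda_\sigma n\bigr)\mathcal{D}$; since each $\mathcal{D}_1(f_\sigma)$ is effective, horizontal, and disjoint from $\mathcal{D}$ by Proposition \ref{prop:divprop}(i), this is already of the required form $\mathcal{D}_1 - \mathcal{D}_2$ with $\mathcal{D}_2$ supported on $\mathcal{D}$. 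The whole problem is thus to choose the exponents so that the vertical part behaves correctly.

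The heart of the argument is the choice of exponents. For $w \in M \setminus M'$ and $C \in S_w$, equations (\ref{eq:urp}) and (\ref{eq:crank1}) show that the class in $\mathbb{Q}W_w / \mathbb{Q}\mathcal{Y}_w$ of the vertical divisor attached to $f_\sigma$ depends only on $\sigma(w)$; write $E_w^{[C]}$ for a representative, so that $\langle E_w^{[C]}, n_{C'} C'\rangle_w = \langle n\mathcal{D}, n_{C'} C'\rangle_w - m\,\delta_{C,C'}$. Using (\ref{eq:fiberint}) in the form $\sum_{C \in S_w} \langle n\mathcal{D}, n_C C\rangle_w = \langle n\mathcal{D}, \mathcal{Y}_w\rangle_w = m$, a direct computation gives
\begin{equation*}
\Bigl\langle \sum_{C \in S_w} \langle n\mathcal{D}, n_C C\rangle_w\, E_w^{[C]},\ n_{C'} C'\Bigr\rangle_w = 0 \quad\text{for every } C' \in S_w,
\end{equation*}
so by the negative definiteness in Lemma \ref{lem:okay} the combination $\sum_{C} \langle n\mathcal{D}, n_C C\rangle_w\, E_w^{[C]}$ is trivial in $\mathbb{Q}W_w / \mathbb{Q}\mathcal{Y}_w$. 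This dictates the product weight $\lambda_\sigma = \prod_{v \in M \setminus M'} \langle n\mathcal{D}, n_{\sigma(v)}\sigma(v)\rangle_v$, a positive integer because $\mathcal{D}$ meets every component of every fiber (Lemma \ref{lem:nicehoriz}). With this choice, for each fixed $w$ the coefficient of $E_w^{[C]}$ in the total vertical part at $w$ equals $\langle n\mathcal{D}, n_C C\rangle_w$ times the $w$-independent constant $\prod_{v \in M \setminus M',\, v \ne w} m$, so that total vertical part is a multiple of the trivial combination above and hence vanishes in $\mathbb{Q}W_w / \mathbb{Q}\mathcal{Y}_w$. At each $v \in M'$ every $f_\sigma$ uses the same distinguished component $C_0(v)$, so all its $E_v$ contribute with the signs of (\ref{eq:crank}); since the total weight $\Lambda = \sum_\sigma \lambda_\sigma$ is positive, the vertical part at $v \in M'$ retains the required signs (\ref{eq:crank2}).

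It remains to clean up. After the previous step $\mathrm{div}_{\mathcal{Y}}(\prod_\sigma f_\sigma^{\lambda_\sigma})$ equals $\mathcal{D}_1 - \mathcal{D}_2 + \sum_{v \in M'} E_v$ plus a correction $\sum_{w \in M \setminus M'} q_w \mathcal{Y}_w$ supported on full fibers, which lies in the kernel of each fiber pairing by the second paragraph. Raising to a suitable power I may assume each $q_w \in \mathbb{Z}$. Since $\mathrm{Pic}(A)$ is torsion by Hypothesis \ref{hyp:start}, a positive multiple of the divisor $\sum_w q_w (w)$ on $\mathrm{Spec}(A)$ is principal; replacing $\prod_\sigma f_\sigma^{\lambda_\sigma}$ by the corresponding power and multiplying by a $\gamma \in K$ whose divisor on $\mathrm{Spec}(A)$ is exactly this principal divisor removes the correction. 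As $\mathrm{div}_{\mathcal{Y}}(\gamma)$ is then supported on full fibers lying only over the $w \in M \setminus M'$, this last step introduces no vertical components outside $M'$ and leaves the $E_v$ for $v \in M'$, hence the inequalities (\ref{eq:crank2}), unchanged; the horizontal part is merely rescaled. The resulting $h$ has all the stated properties. The main obstacle is precisely the simultaneous cancellation of the vertical parts over all $w \in M \setminus M'$: a single application of Proposition \ref{prop:divprop} controls every place of $M$ at once and cannot be localized to one fiber, and it is the multiplicative product structure of the weights $\lambda_\sigma$, combined with the fiberwise negative definiteness of Lemma \ref{lem:okay}, that decouples these constraints while keeping all exponents positive.
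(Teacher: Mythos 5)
Your proof is correct, but it takes a genuinely different route from the paper. The paper argues by induction on $|M - M'|$, adjoining one place $v_0$ at a time: for each $C \in S_{v_0}$ it invokes the inductive hypothesis to get a function $h_C$, and then appeals to an abstract linear-algebra fact (Lemma \ref{lem:linearalg}: a rational matrix with negative diagonal, positive off-diagonal entries and zero row sums admits a positive vector in its left kernel) to find positive weights $a_C$ making $\sum_C a_C E_{C,v_0}$ orthogonal to every component of $\mathcal{Y}_{v_0}$; negative semi-definiteness and the torsionness of $\mathrm{Pic}(A)$ then finish the step. You instead work non-inductively with the full family $\{f_\sigma\}_{\sigma \in \prod_{v \in M\setminus M'} S_v}$ coming directly from Proposition \ref{prop:divprop}, and this pays off: because (\ref{eq:urp}) and (\ref{eq:crank1}) give the closed formula $\langle E_w^{\sigma}, n_{C'}C'\rangle_w = \langle n\mathcal{D}, n_{C'}C'\rangle_w - m\,\delta_{\sigma(w),C'}$, you can write the positive kernel vector down explicitly as $(\langle n\mathcal{D}, n_C C\rangle_w)_{C \in S_w}$ (positive by Lemma \ref{lem:nicehoriz} and (\ref{eq:bounds})), and the multiplicative weights $\lambda_\sigma$ decouple the constraints at the different places via (\ref{eq:fiberint}). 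This eliminates both the induction and Lemma \ref{lem:linearalg} — which the paper genuinely needs, since its $h_C$ are themselves products whose vertical intersection numbers are no longer given by a simple formula — at the cost of normalizing $n$ and $m$ across $\prod_{v}|S_v|$ functions and carrying a larger product. Both arguments ultimately rest on the same two pillars: the negative semi-definiteness of $\langle\ ,\ \rangle_{v}$ with radical $\mathbb{Q}\mathcal{Y}_{v}$ (Lemma \ref{lem:okay}), and Hypothesis \ref{hyp:start}(ii) to convert a rational multiple of a full fiber into the divisor of a constant.
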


 \begin{proof} We use induction on the number of elements of $M - M'$.  If $M = M'$,
 the Lemma is shown by Proposition \ref{prop:divprop}.  Suppose now that Lemma \ref{lem:fixit} holds when $M'$ is replaced by $M' \cup \{v_0\}$
   for some $v_0 \in M - M'$.  For each $C \in S_{v_0}$, we thus can find a function
   $h_C$ with the following properties:
 \begin{enumerate}
 \item[i.]  The divisor of $h_C$ is
 \begin{equation}
 \label{eq:divisornownew}
 \mathrm{div}_{\mathcal{Y}}(h_C) = \mathcal{D}_{C,1} - \mathcal{D}_{C,2} + \sum_{v \in M'} E_{C,v} + E_{C,v_0}
 \end{equation}
 where $\mathcal{D}_{C,1}$ and $\mathcal{D}_{C,2}$ are horizontal effective
 divisors which do not intersect, $\mathcal{D}_{C,2}$ has the same support as $\mathcal{D}$, and $E_{C,v}$ is supported on
 $\mathcal{Y}_v$ for $v \in M' \cup \{v_0\}$.
 \item[ii.]
For $v \in M'$ and $C' \in S_v$
 we have
 \begin{equation}
 \label{eq:crank3}
 \langle E_{C,v}, C' \rangle_v > 0 \quad \mathrm{if} \quad C_0(v) \ne C' \quad  \mathrm{and}\quad
  \langle E_{C,v}, C_0(v)\rangle_v
 < 0.
 \end{equation}
 \item[iii.]
 For $C' \in S_{v_0}$ we have
  \begin{equation}
 \label{eq:crank4}
 \langle E_{C,v_0}, C' \rangle_{v_0} > 0 \quad \mathrm{if} \quad C \ne C'  \quad  \mathrm{and}\quad
  \langle E_{C,v_0}, C\rangle_{v_0}
 < 0.
 \end{equation}
 \end{enumerate}

 We claim that there are positive integers $\{a_{C}\}_{C \in S_{v_0}}$ such that
 the divisor $E_{v_0} = \sum_{C \in S_{v_0}} a_{C} E_{C,v_0}$ has the property
 that
 \begin{equation}
 \label{eq:enice}
 \langle E_{v_0},C' \rangle_{v_0} = 0 \quad \mathrm{for \ all} \quad C' \in S_{v_0}.
 \end{equation}
 Before showing this, let us first show how it can be used to complete
 the proof of Lemma \ref{lem:fixit}.

 By Lemma \ref{lem:okay}, the intersection pairing $\langle  \ , \  \rangle_{v_0}$ is negative
 semi-definite on the vector space spanned by $S_{v_0}$.  Hence
 (\ref{eq:enice}) implies that $E_{v_0}$ is a rational multiple of the fiber $\mathcal{Y}_{v_0}$.
 Since $\mathrm{Pic}(A)$ is finite by Hypothesis \ref{hyp:start}(ii), there is a positive
 integer $d$ such that  $d\cdot E_{v_0}$ is the principal (vertical) divisor of a constant $a \in K^*$.  We have
 \begin{equation}
 \label{eq:almostdone}
 \sum_{C \in S_{v_0}} a_{C} \cdot \mathrm{div}_{\mathcal{Y}}(h_C) = \mathcal{D}_1 - \mathcal{D}_2 +
 \sum_{v \in M'} E_{v} + E_{v_0}
 \end{equation}
 where
 \begin{equation}
 \label{eq:dumb}
 \mathcal{D}_i = \sum_{C \in S_{v_0}} a_C \mathcal{D}_{C,i}
 \end{equation}
 and
 \begin{equation}
 \label{eq:youwie}
 E_v = \sum_{C \in S_{v_0}} a_C E_{C,v}
 \end{equation}
 for $v \in M' \cup \{v_0\}$.  The support of each $\mathcal{D}_{C,2}$ equals that of $\mathcal{D}$, and this does not intersect the support of any of the $\mathcal{D}_{C,1}$
 by our induction hypothesis.  It follows that $\mathcal{D}_1$ and $\mathcal{D}_2$
 are effective horizontal divisors which do not intersect, and $\mathcal{D}_2$
 and $\mathcal{D}$ have the same support.   Because of the induction
 hypotheses (\ref{eq:crank3}) and (\ref{eq:crank4}), the fact that all of the
 $a_C$ associated to $C \in S_{v_0}$ are positive integers implies that
condition (\ref{eq:crank3}) holds if we replace $E_{C,v}$ in that condition
by $E_v$.  Now since
$$\mathrm{div}_{\mathcal{Y}}(a) = d \cdot E_{v_0}$$
and $d > 0$ we conclude from (\ref{eq:almostdone}) that the function
\begin{equation}
\label{eq:finalh}
h = a^{-1} \cdot \left ( \prod_{C \in S_{v_0}} h_C^{a_C}\right )^d
\end{equation}
will have all the properties required to show the induction step for Lemma
\ref{lem:fixit} and complete the proof.

It remains to produce positive integers $\{a_{C}\}_{C \in S_{v_0}}$ such
that $$E_{v_0} = \sum_{C \in S_{v_0}} a_{C} E_{C,v_0}$$ has property
 (\ref{eq:enice}), i.e. is perpendicular to every irreducible component $C'$
 of $\mathcal{Y}_{v_0}$.  It will suffice to show that we can do this
 using positive rational numbers $a_C$ since the intersection pairing
 is well defined for all rational linear combinations of fibral divisors.

 Consider the square matrix $W = (W_{C,C'})_{C,C' \in S_v}$ with integral entries
 $$W_{C,C'} = \langle E_{C,v_0}, n(C') C' \rangle$$ where $n(C') > 0$ is the multiplicity
 of $C'$ in the fiber $\mathcal{Y}_v$.  The sum of all the
 entries in the row indexed by $C$ is
 $$\sum_{C' \in S_v} \langle E_{C,v_0}, n(C') C' \rangle_{v_0} = \langle E_{C,v_0}, \sum_{C' \in S_v} n(C') C' \rangle_{v_0} = \langle E_{C,v_0},\mathcal{Y}_{v_0}\rangle_{v_0} = 0$$
 where the last equality is from Lemma \ref{lem:okay}.
 Condition (\ref{eq:crank4}) of the induction
 hypothesis now says that $W$ satisfies the hypotheses of the following
 Lemma, and this Lemma completes the proof of Lemma \ref{lem:fixit}.
 \end{proof}
 \begin{lemma}
 \label{lem:linearalg}
 Suppose $W = (w_{i,j})_{1 \le i,j \le t}$ is a square matrix
 of rational numbers such that the  diagonal (resp. off-diagonal) entries
 are negative (resp. positive) and that the sum of the entries in any row is $0$.
 Then there is a positive rational linear combination of the rows which is the
 zero vector.
 \end{lemma}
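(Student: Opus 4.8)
The plan is to read the statement as a fact about the left kernel of $W$: writing $a=(a_1,\dots,a_t)$ for the vector of coefficients, the asserted positive rational combination of rows that vanishes is exactly a vector $a$ with all $a_i>0$, with $a$ rational, and with $\sum_i a_i w_{ij}=0$ for every $j$, i.e.\ $aW=0$. Since every row of $W$ sums to $0$, the all-ones column vector lies in the right kernel, so $W$ is singular and its left kernel $N=\{a\in\mathbb{R}^t:aW=0\}$ is nonzero. The real content is therefore not existence of a relation, but producing a \emph{strictly positive} one, and then arranging that it be rational.

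To get positivity I would pass to a stochastic perturbation. Fix a rational $c>\max_i |w_{ii}|$ and set $P=I+c^{-1}W$. Then each diagonal entry $P_{ii}=1+c^{-1}w_{ii}>0$, each off-diagonal entry $P_{ij}=c^{-1}w_{ij}>0$, and each row of $P$ sums to $1+c^{-1}\cdot 0=1$; thus $P$ is a row-stochastic matrix all of whose entries are strictly positive. Moreover $aW=0$ holds if and only if $a(I+c^{-1}W)=a$, that is $aP=a$, so $N$ is precisely the space of left $1$-eigenvectors of $P$.

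The main step, and the only nontrivial point, is that a strictly positive stochastic matrix admits a strictly positive left $1$-eigenvector. This is exactly the Perron--Frobenius theorem, applied to $P^{\mathrm{T}}$ (equivalently, the existence of a stationary distribution for the irreducible aperiodic finite Markov chain with transition matrix $P$): there is a real vector $\pi$ with $\pi P=\pi$ and $\pi_j>0$ for all $j$. One can also see the positivity by hand: Brouwer's fixed point theorem gives a fixed point $\pi$ of the continuous self-map $x\mapsto xP$ of the probability simplex, and since $\pi\ge 0$, $\pi\ne 0$, and every entry of $P$ is positive, each coordinate $\pi_j=\sum_i \pi_i P_{ij}>0$. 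Either way we obtain a strictly positive real vector $\pi\in N$. I expect this positivity to be the crux of the argument; everything after it is formal.

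It remains to replace $\pi$ by a rational vector. The subspace $N$ is cut out by the linear equations $\sum_i a_i w_{ij}=0$, which have rational coefficients, so $N$ is defined over $\mathbb{Q}$ and its rational points $N\cap\mathbb{Q}^t$ are dense in $N$. The positive orthant $\{a:a_i>0\ \forall i\}$ is open and contains $\pi\in N$, so $N\cap\{a:a_i>0\ \forall i\}$ is a nonempty relatively open subset of $N$; by density it contains a point $a\in N\cap\mathbb{Q}^t$. This $a$ has all coordinates positive rational and satisfies $\sum_i a_i w_{ij}=0$ for all $j$, which is exactly the assertion that $\sum_i a_i\cdot(\text{$i$-th row of }W)$ is the zero vector, completing the proof.
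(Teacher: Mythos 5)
Your proof is correct, but it takes a genuinely different route from the paper's. The paper argues by induction on $t$: after scaling each row so the diagonal entries equal $-1$ (which forces the off-diagonal entries into $(0,1)$ when $t\ge 3$), one step of Gaussian elimination on the last column produces a $(t-1)\times(t-1)$ matrix satisfying the same hypotheses, and the positive combination is assembled by unwinding the elimination. Your argument instead recognizes $-W$ (up to scaling) as a singular M-matrix: the affine perturbation $P=I+c^{-1}W$ is a strictly positive row-stochastic matrix whose left $1$-eigenspace is exactly the left kernel of $W$, so Perron--Frobenius (or your Brouwer fixed-point argument on the simplex, which is complete and correct as written) yields a strictly positive real left null vector, and the rationality is recovered at the end by density of $N\cap\mathbb{Q}^t$ in the rationally defined subspace $N$. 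All the steps check out, including the strict positivity $\pi_j=\sum_i\pi_iP_{ij}>0$ and the fact that the open positive orthant meets $N$ in a nonempty relatively open set. What the paper's approach buys is complete elementarity and an explicit recursive construction of the coefficients; what yours buys is brevity and a conceptual explanation --- the desired vector is the stationary distribution of an irreducible Markov chain --- at the cost of invoking Perron--Frobenius or Brouwer, which is heavier machinery than the lemma really needs.
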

 \begin{proof}
 We prove this assertion by ascending induction on the size $t$ of $W$.
  If $t = 1$ then $W$ has to be the zero matrix since the sum of the
 entries in any row of $W$ is trivial.  If $t = 2$ then the rows of $W$
 have the form $(-a,a)$ and $(b,-b)$ for some positive rationals $a$ and
 $b$, so $b$ times the first row plus $a$ times the second is $(0,0)$.
 We now suppose the statement is true for matrices of smaller size
 than $t \ge 3$.

 Without loss of generality, we can multiply the $i$-th row
 of $W$ by $-1/w_{i,i} > 0$ to be able to assume that the diagonal entries
 are all equal to $-1$.  Since every off diagonal entry is positive, every off diagonal
 entry has to be a rational number in the open interval $(0,1)$ because the sum
 of the entries in each row is $0$ and $t \ge 3$.

 Thus when we add $w_{i,t}$ times the last row to the $i^{th}$
 row for $i = 1,\ldots,t-1$, we arrive at a matrix $W' = (w'_{i,j})_{i,j = 1}^t$
 such that $w'_{i, t} = 0$ for $i = 1,\ldots,t-1$.  It is elementary to check that the
the $(t-1)\times (t-1)$ matrix $W'' = (w'_{i,j})_{i,j = 1}^{t-1}$ which
 results from dropping the last row and the last column of $W'$ satisfies
 our induction hypotheses.
 We now conclude by induction that there is a
 positive rational linear combination of the rows of $W''$ which
 equals $0$.  The corresponding linear combination of the rows
 of $W'$ is then also $0$.  Since each of the first $t-1$ rows of $W'$
 is the sum of the corresponding row of $W$ with a positive multiple
 of the last row of $W$, we arrive in this way at the a positive
 linear combination of the rows of $W$ which is the zero vector.
 \end{proof}

\medbreak
\noindent {\bf Completion of the proof of Theorem \ref{thm:niceflat}}
\medbreak
Let $M'$ be the empty set in Lemma \ref{lem:fixit}.  This Lemma now
produces divisors $\mathcal{D}_1$ and $\mathcal{D}_2$ which
are horizontal, effective, disjoint, linearly equivalent, and for which
$\mathcal{D}_2$ has the same support as the horizontal effective
ample divisor $\mathcal{D}$.    Lemma \ref{lem:ample} shows there
is an integer $m > 0$ such that  $m\mathcal{D}_2$ is ample.  So on replacing
$\mathcal{D}_i$ by $m\mathcal{D}_i$ for $i = 1, 2$ we arrive at divisors
of the kind needed in Lemma \ref{lem:allright}, which
finishes the proof.

\end{document}